\begin{document}

\title{About the Non-Integer Property of Hyperharmonic Numbers}
\author{Istv\'an Mez\H{o}}
\address{Department of Algebra and Number Theory, Institute of Mathematics, University of Debrecen, Hungary}
\email{imezo@math.klte.hu}
\urladdr{http://www.math.klte.hu/algebra/mezo.htm}
\keywords{harmonic numbers, hyperharmonic numbers, $2$-adic norm}
\subjclass[2000]{11B83}

\newcommand{\NN}{\mathbb{N}}
\newcommand{\ZZ}{\mathbb{Z}}
\newcommand{\QQ}{\mathbb{Q}}
\newcommand{\lcm}{\mathop{\textup{lcm}}\nolimits} %Least Common Multiple
\newcommand{\ord}{\mathop{\textup{Ord}}\nolimits} %2-Order

\newtheorem{Theorem}{Theorem}
\newtheorem{Lemma}[Theorem]{Lemma}
\newtheorem{Corollary}[Theorem]{Corollary}
\newtheorem{Conjecture}[Theorem]{Conjecture}
\theoremstyle{definition}
\newtheorem{Example}[Theorem]{Example}
\newtheorem{Problem}[Theorem]{Problem}

\begin{abstract}It was proven in 1915 by Leopold Theisinger that the $H_n$ harmonic numbers are never integers. In 1996 Conway and Guy have defined the concept of hyperharmonic numbers. The question naturally arises: are there any integer hyperharmonic numbers? The author gives a partial answer to this question and conjectures that the answer is ``no''.
\end{abstract}

\maketitle

The $n$-th harmonic number is the $n$-th partial sum of the harmonic series:
\[H_n=\sum_{k=1}^n\frac{1}{k}.\]

Conway and Guy in \cite{CG} defined the harmonic numbers of higher orders, also known as the hyperharmonic numbers: $H_n^{(1)}:=H_n$, and for all $r>1$ let
\[H_n^{(r)}=\sum_{k=1}^n H_k^{(r-1)}\]
be the $n$-th harmonic number of order $r$.
These numbers can be expressed by binomial coefficients and ordinary harmonic numbers:
\[H_n^{(r)}=\binom{n+r-1}{r-1}(H_{n+r-1}-H_{r-1}).\]

The prominent role of these numbers has been realized recently in combinatory.
The $[\begin{smallmatrix}n\\k\end{smallmatrix}]_r$ $r$-Stirling number is the number of the permutations of the set $\{1,\dots,n\}$ having $k$ disjoint, non-empty cycles, in which the elements $1$ through $r$ are restricted to appear in different cycles.

In \cite{BGG} one can find the following interesting equality:

\[H_n^{(r)}=\frac{[\begin{smallmatrix}n+r\\r+1\end{smallmatrix}]_r}{n!}.\]

Let us turn our attention to the main question of this paper. It is known that any number of consecutive terms not necessarily beginning with 1 will never sum to an integer (see \cite{MW}). As a corollary, we get that the $H_n$ harmonic numbers are never integers ($n>1$). Theisinger proved this latter result directly in 1915 \cite{T}. The question appears obviously: are there any integer hyperharmonic numbers?

Theisinger's main tool was the $2$-adic norm. We give a short summary of his method.
Every rational number $x\neq 0$ can be represented by $x=\frac{p^\alpha r}{s}$, where $p$ is a fixed prime number, $r$ and $s$ are relative prime integers to $p$. $\alpha$ is a unique integer. We can define the $p$-adic norm of $x$ by
\[|x|_p=p^{-\alpha},\mbox{ and let }|0|_p=0.\]
This norm fulfills the properties of the usual norms, namely
\[|x|_p=0\Longleftrightarrow\,x=0,\]
\[|xy|_p=|x|_p|y|_p\quad(x,y\in\QQ),\]
\[|x+y|_p\le|x|_p+|y|_p\quad(x,y\in\QQ).\]
Furthermore, the so-called strong triangle inequality also holds:
\[|x+y|_p\le\max\{|x|_p,|y|_p\}(\le|x|_p+|y|_p).\]

We shall use the following property of integer numbers:
\[x\in\ZZ\Longrightarrow x=p^\alpha r\Longrightarrow|x|_p=\frac{1}{p^\alpha}\le 1,\]
where $r$ and the prime $p$ are relative prime integers. This means that if the $p$-norm of a rational $x$ is greater than 1 then $x$ is necessarily non-integer.

Let us introduce the order of a natural number $n$: if $2^m\le n<2^{m+1}$, then $\ord_2(n):=m$.
It is obvious that $\ord_2(n)=\left\lfloor\log_2(n)\right\rfloor$.

\begin{Theorem}
\[|H_n|_2=2^{\ord_2(n)}\quad(n\in\NN),\]
that is -- by our observation above -- $H_n$ is never integer.
\end{Theorem}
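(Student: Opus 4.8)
The plan is to combine the ultrametric (strong triangle) inequality with the elementary observation that among the denominators $1,2,\dots,n$ exactly one carries the maximal power of $2$. Start from $H_n=\sum_{k=1}^n\frac{1}{k}$ and recall that the $2$-adic norm of a reciprocal is $\left|\frac{1}{k}\right|_2=|k|_2^{-1}=2^{v_2(k)}$, where $v_2(k)$ is the exponent of $2$ in the factorization of $k$. Thus the $2$-adic size of each summand is controlled entirely by the $2$-divisibility of its denominator, and the whole problem reduces to tracking these exponents.

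First I would single out the dominant summand. Put $m:=\ord_2(n)$, so that $2^m\le n<2^{m+1}$. The only multiple of $2^m$ lying in $\{1,\dots,n\}$ is $2^m$ itself, because the next one, $2^{m+1}$, already exceeds $n$; in particular no $k\le n$ is divisible by $2^{m+1}$. Hence $v_2(k)\le m$ for every $k$ in the range, with equality precisely for $k=2^m$. Consequently the term $\frac{1}{2^m}$ has norm $\left|\frac{1}{2^m}\right|_2=2^m$, whereas every other term satisfies $\left|\frac{1}{k}\right|_2=2^{v_2(k)}\le 2^{m-1}$.

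Next I would separate this term from the rest and estimate the remainder. Writing $H_n=\frac{1}{2^m}+S$ with $S:=\sum_{\substack{1\le k\le n\\ k\neq 2^m}}\frac{1}{k}$, iterated use of the strong triangle inequality yields $|S|_2\le\max_{k\neq 2^m}\left|\frac{1}{k}\right|_2\le 2^{m-1}<2^m$. The two parts therefore have different $2$-adic norms, so I would invoke the standard sharpening of the ultrametric inequality: if $|x|_2\neq|y|_2$, then $|x+y|_2=\max\{|x|_2,|y|_2\}$. This follows from the displayed properties, for if $|x|_2>|y|_2$ then $|x|_2=|(x+y)-y|_2\le\max\{|x+y|_2,|y|_2\}$ forces $|x+y|_2=|x|_2$. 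Applying it with $x=\frac{1}{2^m}$ and $y=S$ gives $|H_n|_2=2^m=2^{\ord_2(n)}$, as claimed.

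I expect the only genuine point of the argument to be the uniqueness of the dominant term -- the fact that a single power of $2$ lives in the interval $[2^m,2^{m+1})$ -- since once a strictly largest summand is identified, the ultrametric inequality does the rest automatically. Finally, the non-integrality follows at once: for $n\ge 2$ we have $\ord_2(n)\ge 1$, so $|H_n|_2=2^{\ord_2(n)}\ge 2>1$, and by the observation that integers have $2$-adic norm at most $1$, the number $H_n$ cannot be an integer.
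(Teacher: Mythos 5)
Your proof is correct, but it takes a genuinely different route from the paper's main argument. The paper's first proof (Theisinger's method) is recursive: it subtracts the odd-denominator terms from $H_n$, factors $\frac{1}{2}$ out of what remains to obtain $|H_n|_2\ge 2|H_{\lfloor n/2\rfloor}|_2$, iterates this $\ord_2(n)$ times to reach the lower bound $2^{\ord_2(n)}$, and then gets the matching upper bound from the strong triangle inequality. You instead isolate a single dominant term: since $2^m$ (with $m=\ord_2(n)$) is the only multiple of $2^m$ in $\{1,\dots,n\}$, the summand $\frac{1}{2^m}$ has norm $2^m$ while every other summand has norm at most $2^{m-1}$, and the sharpened ultrametric equality $|x+y|_2=\max\{|x|_2,|y|_2\}$ when $|x|_2\neq|y|_2$ (which you correctly derive from the stated axioms, so there is no gap there) finishes the computation in one stroke, with no induction. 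Your route is in fact closer in spirit to the paper's \emph{second} proof, which exploits exactly the same uniqueness of the term whose denominator is divisible by $2^m$, but phrases it through odd least common multiples rather than through the norm itself. What your version buys is brevity and transparency: the exact value of $|H_n|_2$, and hence non-integrality for $n\ge 2$, appears immediately. What the paper's recursive version buys is a reusable template: the same subtract-the-odd-terms-and-halve procedure is applied later, in the proof of Theorem 5, to the tail $H_{n+r-1}-H_{r-1}$ in the case $\ord_2(n+r-1)=\ord_2(r-1)$, where the author needs the method itself and not merely the single-term observation.
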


\begin{proof}First, let $n$ be even.
Since $|x|_2=|-x|_2$ for all $x\in\QQ$, by the strong triangle inequality we get
\[\max\left\{|H_n|_2,|1|_2\right\}=\max\left\{|H_n|_2,|1|_2,\left|\frac{1}{3}\right|_2,\left|\frac{1}{5}\right|_2,\dots,\left|\frac{1}{n-1}\right|_2\right\}\ge\]
\[\ge\left|H_n-1-\frac{1}{3}-\frac{1}{5}-\cdots-\frac{1}{n-1}\right|_2=\]
\[=\left|\frac{1}{2}+\frac{1}{4}+\cdots+\frac{1}{n-2}+\frac{1}{n}\right|_2=\]
\[=\left|\frac{1}{2}\right|_2\left|1+\frac{1}{2}+\frac{1}{3}+\cdots+\frac{1}{n/2}\right|_2=2\left|H_{n/2}\right|_2.\]

If $n$ is odd, the situation is the same:
\[\max\left\{|H_n|_2,|1|_2\right\}\ge 2\left|H_{(n-1)/2}\right|_2.\]
The reader may verify it.

So we get that the 2-adic norm of the harmonic numbers is monotone increasing. Since $|H_2|_2=\left|\frac{3}{2}\right|_2=2$, the 2-adic norm of all the harmonic numbers are greater than 1. As a corollary, this means that the harmonic numbers are not integers because of the property of the 2-adic norm mentioned above.

We can continue the calculations on $H_{n/2}$ (or on $H_{(n-1)/2}$) instead of $H_n$. For instance let us consider that $n/2$ is even. Then the method described above gives that
\[|H_{n/2}|_2\ge\left|\frac{1}{2}\right|_2|H_{n/4}|_2=2|H_{n/4}|_2.\]
This and the previous estimation implies that
\[|H_n|_2\ge\left|\frac{1}{2}\right|_2|H_{n/2}|_2\ge\left|\frac{1}{2}\right|_2\left|\frac{1}{2}\right|_2|H_{n/4}|_2=4|H_{n/4}|_2.\]
And so on. If $n/2$ is odd then we choose $(n/2-1)/2$ instead of $n/4$. We can perform these steps exactly $\ord_2(n)$ times.

After all, we shall have the following:
\[|H_n|_2\ge \left|\frac{1}{2^{\ord_2(n)}}\right|_2|H_1|_2=2^{\ord_2(n)}.\]

On the other hand,
\[|H_n|_2\le\max\left\{|1|_2,\left|\frac{1}{2}\right|_2,\cdots,\left|\frac{1}{n}\right|_2\right\}=\left|\frac{1}{2^{\ord_2(n)}}\right|_2=2^{\ord_2(n)},\]
because the greatest 2-power occuring between $1$ and $n$ is $\ord_2(n)$.

The inequalities detailed above give the statement.
\end{proof} 

A different approach can be found in \cite{GKP}, \cite{BR} and in their references. The next proof comes from these sources.

\begin{proof}Let us fix the order of $n$, i.e.: $\ord_2(n):=m$. This implies that the denominator of $\frac{2^{m-1}}{n}$ is odd, unless $n=2^m$. We get that the number
\[2^{m-1}H_n-\frac{1}{2}\]
can be represented by the sum of rationals with odd denominators. For example,
\[2^{m-1}H_n-\frac{1}{2}=\frac{a_1}{b_1}+\cdots+\frac{a_s}{b_s}=\frac{c}{\lcm(b_1,\dots,b_s)},\]
where $b_i$ is odd for all $i=1,\dots,s$. It means that $b:=\lcm(b_1,\dots,b_s)$ is odd. The last formula gives the result
\[H_n=\frac{\frac{c}{b}+\frac{1}{2}}{2^{m-1}}=\frac{2c+b}{2^m b}.\]
\end{proof}

Let us turn our attention to hyperharmonic numbers. We need a lemma which can be found in \cite{BS}:

\begin{Lemma}
\[|n!|_p=p^{(A_p(n)-n)/(p-1)},\]
where $A_p(n)$ is the sum of the digits of the $p$-adic expansion of $n$.
\end{Lemma}

\begin{Example} Let $p=2$ and $n=11$. Then $n=1011_2$, that is, $A_2(n)=3$.
\[|n!|_2=|39916800|_2=|256\cdot155925|_2=|256|_2|155925|_2=|2^8|_2\cdot1=2^{-8}.\]
We can apply the lemma: $A_2(n)-n=3-11=-8$, whence $|n!|_2=2^{-8}$.
\end{Example}

\begin{Theorem} If $\ord_2(n+r-1)>\ord_2(r-1)$ then
\[|H_n^{(r)}|_2=2^{A_2(n+r-1)-A_2(n)-A_2(r-1)+\ord_2(n+r-1)},\]
else
\[|H_n^{(r)}|_2=2^{A_2(n+r-1)-A_2(n)-A_2(r-1)+\max\left\{\left|\frac{1}{r}\right|_2,\left|\frac{1}{r+1}\right|_2,\dots,\left|\frac{1}{n+r-1}\right|_2\right\}}.\]
\end{Theorem}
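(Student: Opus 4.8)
The plan is to start from the closed form $H_n^{(r)}=\binom{n+r-1}{r-1}\bigl(H_{n+r-1}-H_{r-1}\bigr)$ recorded in the introduction and to exploit the complete multiplicativity of $|\cdot|_2$ to split
\[|H_n^{(r)}|_2=\left|\binom{n+r-1}{r-1}\right|_2\cdot\bigl|H_{n+r-1}-H_{r-1}\bigr|_2.\]
The two factors are then attacked by entirely different tools: the binomial factor by the factorial Lemma, and the difference of harmonic numbers by the strong triangle inequality. Both branches of the theorem share the contribution of the first factor, so the case distinction will enter only through the second.

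For the first factor I would write $\binom{n+r-1}{r-1}=\frac{(n+r-1)!}{(r-1)!\,n!}$ and apply the Lemma with $p=2$ (so that $p-1=1$), giving $|m!|_2=2^{A_2(m)-m}$ for each of the three factorials. The linear terms $-(n+r-1)+(r-1)+n$ cancel exactly, leaving
\[\left|\binom{n+r-1}{r-1}\right|_2=2^{A_2(n+r-1)-A_2(n)-A_2(r-1)}.\]
This is precisely the common part of the exponent appearing in both formulas, and the step is routine bookkeeping with the digit sums.

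The heart of the matter is the second factor $\bigl|H_{n+r-1}-H_{r-1}\bigr|_2=\bigl|\sum_{k=r}^{n+r-1}\frac1k\bigr|_2$. The strong triangle inequality immediately yields the upper bound $\le\max_{r\le k\le n+r-1}|1/k|_2$. To upgrade this to the equality the theorem asserts, I would prove the key combinatorial fact that in any interval of consecutive integers the maximal $2$-adic valuation is attained by exactly one element. Indeed, if two distinct numbers in the range shared the maximal valuation $v$, say $k_1=2^va$ and $k_2=2^vb$ with $a<b$ both odd, then $2^v(a+1)$ would lie strictly between them and hence in the range, while being divisible by $2^{v+1}$, contradicting maximality. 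With a unique dominating term, the ultrametric property forces the norm of the sum to equal that single largest term norm. I expect this uniqueness-of-maximizer step to be the main obstacle; everything else is downstream computation.

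It then remains to identify this dominating term in each branch. When $\ord_2(n+r-1)>\ord_2(r-1)$, writing $M=\ord_2(n+r-1)$ one checks that $r\le 2^M\le n+r-1$ and that $2^M$ is the only multiple of $2^M$ in the range (the next one, $2^{M+1}$, already exceeds $n+r-1$); hence the maximal valuation is $M$ and $\bigl|H_{n+r-1}-H_{r-1}\bigr|_2=2^{\ord_2(n+r-1)}$, which combined with the binomial factor gives the first formula. In the remaining case one necessarily has $\ord_2(n+r-1)=\ord_2(r-1)$, no exact power of $2$ lies in $[r,n+r-1]$, and the dominating term is simply whichever $1/k$ carries the largest norm, producing the $\max$-expression of the second formula.
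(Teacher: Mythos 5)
Your proposal is correct, and it shares the paper's outer frame: the closed form $H_n^{(r)}=\binom{n+r-1}{r-1}(H_{n+r-1}-H_{r-1})$, multiplicativity of $|\cdot|_2$, and the factorial Lemma giving $\left|\binom{n+r-1}{r-1}\right|_2=2^{A_2(n+r-1)-A_2(n)-A_2(r-1)}$. Where you genuinely differ is in the factor $|H_{n+r-1}-H_{r-1}|_2$. The paper treats the two cases by different means: when $\ord_2(n+r-1)>\ord_2(r-1)$ it recycles Theorem 1, writing $H_{n+r-1}=\frac{a}{2^{\ord_2(n+r-1)}b}$ and $H_{r-1}=\frac{c}{2^{\ord_2(r-1)}d}$ with $a,b,c,d$ odd and noting that the numerator of the difference is then odd; in the case $\ord_2(n+r-1)=\ord_2(r-1)$ it only sketches a repetition of the recursive halving argument from the first proof of Theorem 1, combined with the strong triangle inequality. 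You replace both by a single mechanism: the observation that an interval of consecutive integers contains exactly one element of maximal $2$-adic valuation (your proof of this is sound: if $2^va<2^vb$ with $a<b$ both odd were in the interval, then $a+1<b$ because $a+1$ is even while $b$ is odd, so $2^v(a+1)$, a multiple of $2^{v+1}$, would lie strictly between them), after which the ultrametric ``isosceles'' equality forces $\left|\sum_{k=r}^{n+r-1}\frac{1}{k}\right|_2$ to equal the norm of the unique dominating term; in the first case you correctly identify that term as $1/2^{\ord_2(n+r-1)}$, since $\ord_2(r-1)<\ord_2(n+r-1)$ gives $r\le 2^{\ord_2(n+r-1)}\le n+r-1$ and no larger multiple of $2^{\ord_2(n+r-1)}$ fits in the range. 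This buys you a uniform and fully rigorous treatment of the second case (which the paper essentially leaves to the reader), and independence from Theorem 1 --- indeed your argument reproves Theorem 1 on taking $r=1$; the paper's route buys a very short first case at the price of quoting Theorem 1 and leaving the second case informal. One remark applying equally to both proofs: what is actually established in the second case is $|H_n^{(r)}|_2=2^{A_2(n+r-1)-A_2(n)-A_2(r-1)}\cdot\max\left\{\left|\frac{1}{r}\right|_2,\dots,\left|\frac{1}{n+r-1}\right|_2\right\}$, so the $\max$ appearing in the exponent of the theorem's second display must be read as the corresponding maximal valuation rather than the norm itself; your formulation matches this intended meaning.
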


\begin{proof}
\[\left|H_n^{(r)}\right|_2=\left|\binom{n+r-1}{r-1}(H_{n+r-1}-H_{r-1})\right|_2=\]
\[=\left|\binom{n+r-1}{r-1}\right|_2\left|\frac{a}{2^{\ord_2(n+r-1)}b}-\frac{c}{2^{\ord_2(r-1)}d}\right|_2=\]
\[=\left|\binom{n+r-1}{r-1}\right|_2\left|\frac{2^{\ord_2(r-1)}ad-2^{\ord_2(n+r-1)}bc}{2^{\ord_2(n+r-1)+\ord_2(r-1)}bd}\right|_2.\]
Because of the condition $\ord_2(n+r-1)>\ord_2(r-1)$ we get
\[\left|H_n^{(r)}\right|_2=\left|\binom{n+r-1}{r-1}\right|_2\left|\frac{ad-2^{\ord_2(n+r-1)-\ord_2(r-1)}bc}{2^{\ord_2(n+r-1)}bd}\right|_2.\]
Since the nominator is odd, we get the following:
\[\left|\frac{ad-2^{\ord_2(n+r-1)-\ord_2(r-1)}bc}{2^{\ord_2(n+r-1)}bd}\right|_2=2^{\ord_2(n+r-1)}.\]
To compute the $2$-adic norm of the binomial coefficient, we use the previous lemma.
\[\left|\binom{n+r-1}{r-1}\right|_2=\left|\frac{(n+r-1)!}{(r-1)!n!}\right|_2=\]
\[=\frac{2^{A_2(n+r-1)-n-r+1}}{2^{A_2(r-1)-r+1}2^{A_2(n)-n}}=2^{A_2(n+r-1)-A_2(n)-A_2(r-1)}.\]
This, and the previous equality give the result with respect to the condition $\ord_2(n+r-1)>\ord_2(r-1)$.

Let us fix an arbitrary $n$ for which $\ord_2(n+r-1)=\ord_2(r-1)$.
\[H_{n+r-1}-H_{r-1}=\frac{1}{r}+\frac{1}{r+1}+\cdots+\frac{1}{n+r-1}.\]
Let us substract all of the fractions with odd denominators. Then we can take $\frac{1}{2}$ out of the remainder and continue the recursive method described in the first proof of Theorem 1. We can make such substraction steps
\[\max\left\{\left|\frac{1}{r}\right|_2,\left|\frac{1}{r+1}\right|_2,\dots,\left|\frac{1}{n+r-1}\right|_2\right\}\]
times. The result:
\[|H_{n+r-1}-H_{r-1}|_2\ge\max\left\{\left|\frac{1}{r}\right|_2,\left|\frac{1}{r+1}\right|_2,\dots,\left|\frac{1}{n+r-1}\right|_2\right\}.\]

On the other hand, by the strong triangle inequality
\[|H_{n+r-1}-H_{r-1}|_2\le\max\left\{\left|\frac{1}{r}\right|_2,\left|\frac{1}{r+1}\right|_2,\dots,\left|\frac{1}{n+r-1}\right|_2\right\}.\]
\end{proof}

\begin{Corollary}The sum of the harmonic numbers cannot be integer:
\[H_1+H_2+\cdots+H_n\not\in\NN\quad(n>1).\]
Or, which is the same,
\[H_n^{(2)}\not\in\NN\quad(n>1).\]
\end{Corollary}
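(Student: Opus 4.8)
The plan is to specialise the preceding theorem to $r=2$, starting from the closed form $H_n^{(2)}=\binom{n+1}{1}(H_{n+1}-H_1)=(n+1)(H_{n+1}-1)$. First I would check that the first case of that theorem always applies: since $\ord_2(r-1)=\ord_2(1)=0$ while $\ord_2(n+1)\ge 1$ for every $n\ge 1$, the hypothesis $\ord_2(n+1)>\ord_2(r-1)$ holds throughout. The theorem then yields
\[|H_n^{(2)}|_2=2^{A_2(n+1)-A_2(n)-A_2(1)+\ord_2(n+1)},\]
so the whole argument reduces to showing this exponent is positive; then $|H_n^{(2)}|_2>1$ and the integrality criterion of the introduction gives $H_n^{(2)}\notin\NN$.

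The key elementary step is the effect of adding $1$ on the binary digit sum. If the binary expansion of $n$ ends in exactly $t$ ones, carrying converts those ones into zeros and raises the next digit, so $A_2(n+1)=A_2(n)-t+1$; moreover $2^t$ is then the largest power of $2$ dividing $n+1$. Substituting this and $A_2(1)=1$ collapses the exponent to $\ord_2(n+1)-t=\lfloor\log_2(n+1)\rfloor-t$, the distance between the highest and the lowest set bit of $n+1$. This is nonnegative, and strictly positive unless $n+1$ has a single nonzero binary digit, i.e.\ unless $n+1$ is a power of $2$. Hence for every $n>1$ with $n+1$ not a power of $2$ we obtain $|H_n^{(2)}|_2>1$, settling this case.

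The main obstacle is exactly the exceptional family $n+1=2^m$ with $m\ge 2$, where the exponent vanishes, $|H_n^{(2)}|_2=1$, and the $2$-adic test says nothing; here I would change primes. By Bertrand's postulate the interval $\left(\tfrac{n+1}{2},\,n+1\right)$ contains a prime $p$, necessarily odd, and $p$ is the only multiple of $p$ in $\{1,2,\dots,n+1\}$. By the strong triangle inequality $|H_{n+1}|_p=\left|\tfrac1p\right|_p=p$, and $|H_{n+1}-1|_p=p$ as well since $|1|_p=1<p$. Because $n+1=2^m$ is coprime to $p$, we have $|n+1|_p=1$, whence
\[|H_n^{(2)}|_p=|(n+1)(H_{n+1}-1)|_p=|n+1|_p\,|H_{n+1}-1|_p=p>1,\]
so $H_n^{(2)}$ is non-integer here too. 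Note that the hypothesis $n>1$ is used sharply: it forces $m\ge 2$, which is exactly what makes the above interval contain a prime (for $n=1$ one has $H_1^{(2)}=1$, a genuine integer, and no prime lies in $\left(1,2\right)$). Combining the two cases proves the corollary, and I expect the only real subtlety to be noticing that the preceding theorem degenerates on the powers of two and must be supplemented by this auxiliary prime.
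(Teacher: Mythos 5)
Your proposal is correct, and while its skeleton matches the paper's (specialize the preceding theorem to $r=2$, note that $\ord_2(n+1)>\ord_2(1)=0$ always holds, show the exponent $A_2(n+1)-A_2(n)-A_2(1)+\ord_2(n+1)$ vanishes exactly when $n+1$ is a power of $2$, then treat that family separately), it departs from the paper in both halves in ways worth recording. In the generic case your carry argument is sharper: writing $A_2(n+1)=A_2(n)-t+1$, where $t$ is the number of trailing ones of $n$, gives the exponent exactly as $\ord_2(n+1)-t$, the gap between the highest and lowest set bit of $n+1$; the paper instead only bounds $A_2(n)$ and $A_2(n+1)$ separately and locates where the minimum $0$ can be attained, which yields the same dichotomy but less information. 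The genuine divergence is the exceptional family $n=2^m-1$. The paper stays at the prime $2$: using Theorem 1 it writes $H_{n+1}=\frac{a}{2^m b}$ with $a,b$ odd and concludes $H_n^{(2)}=(n+1)(H_{n+1}-1)=\frac{a}{b}-2^m\not\in\NN$. That conclusion silently requires $b>1$ (for $m=1$ one has $b=1$ and $H_1^{(2)}=1$ really is an integer), and the paper never justifies $b>1$ for $m\ge2$. Your switch to an odd prime $p\in\left(2^{m-1},2^m\right)$ supplied by Bertrand's postulate, with $|H_{n+1}-1|_p=p$ and $|n+1|_p=1$, hence $|H_n^{(2)}|_p=p>1$, settles this case completely; indeed, proving the paper's implicit claim $b>1$ amounts to exhibiting an odd prime in the denominator of $H_{2^m}$, which is essentially the same argument, so your detour through Bertrand is not an extravagance but fills a real gap. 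The trade-off: the paper's version is purely $2$-adic and self-contained modulo that gap, while yours imports a nontrivial external theorem but is airtight. One small point of care: your step $|H_{n+1}|_p=\left|\frac{1}{p}\right|_p$ uses the equality case of the ultrametric inequality (a sum has the norm of its unique dominant term); this is standard and easily derived from the inequality the paper states, but since the paper records only the inequality form, it deserves one line of justification.
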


\begin{proof}$H_1+H_2+\cdots+H_n=H_n^{(2)}$. The condition with respect to the order of $n$ and $r$ holds because $\ord_2(n+2-1)>\ord_2(2-1)=0$ for all $n\ge1$. Furthermore,
\[\left|H_n^{(2)}\right|_2=2^{A_2(n+1)-A_2(n)-A_2(1)+\ord_2(n+1)}.\]
Let $m=\ord_2(n+1)$. Our goal is to minimize the power of $2$. $\ord_2(n+1)=m$ implies that $n+1<2^{m+1}$, therefore $1\le A_2(n+1)\le m+1$ and $1\le A_2(n)\le m$. The minimum in the power is taken when $A_2(n)=m$ and $A_2(n+1)=1$. It is possible if and only if $n=2^m-1$. In this case
\[A_2(n+1)-A_2(n)-A_2(1)+\ord_2(n+1)=1-m-1+m=0.\]

We get that if $n\neq 2^m-1$ for some $m$, then $|H_n^{(2)}|_2>1$, that is, $H_n^{(2)}\not\in\NN$. On the other hand, let us assume that $n$ has the form $2^m-1$. This implies that
\[H_n^{(2)}=\binom{n+2-1}{2-1}(H_{n+2-1}-H_{2-1})=\]
\[=(n+1)(H_{n+1}-1)=2^m\left(\frac{a}{2^{\ord_2(n+1)}b}-1\right)=\frac{a}{b}-2^m\not\in\NN.\]
\end{proof}

One can easily prove the following, using the method in the previous proof.

\begin{Corollary}$H_n^{(3)}\not\in\NN$ for all $n>1$.
\end{Corollary}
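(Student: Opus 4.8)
The target statement is $H_n^{(3)}\notin\NN$ for all $n>1$. The natural approach is to mimic the proof of the previous corollary verbatim, now with $r=3$. So the plan is to apply Theorem 3 with $r=3$, which requires me to check the order condition $\ord_2(n+r-1)=\ord_2(n+2)$ against $\ord_2(r-1)=\ord_2(2)=1$, and then to analyze the exponent $A_2(n+2)-A_2(n)-A_2(2)+(\text{order term})$ to decide when it can vanish. Since $A_2(2)=1$, the exponent in the main case becomes $A_2(n+2)-A_2(n)-1+\ord_2(n+2)$.

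\textbf{Carrying out the digit analysis.} First I would split on whether $\ord_2(n+2)>1$ or $\ord_2(n+2)=1$ (the case $\ord_2(n+2)=0$ is impossible since $n+2\ge 3$ forces $n+2$ even only sometimes — I must be careful here and treat odd $n+2$ separately). In the main case $\ord_2(n+2)>1$, Theorem 3 gives the exponent $E=A_2(n+2)-A_2(n)-1+\ord_2(n+2)$, and I would minimize $E$ exactly as in the $r=2$ proof: writing $m=\ord_2(n+2)$, I have $1\le A_2(n+2)$ and an upper bound on $A_2(n)$, and I would seek the configurations of binary digits of $n$ and $n+2$ making $E\le 0$. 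The key difference from $r=2$ is that adding $2$ rather than $1$ propagates the carry starting from the second bit, so the relationship between $A_2(n)$ and $A_2(n+2)$ is governed by the trailing binary pattern of $n$ above its lowest bit. I expect the only candidates for $E=0$ to be sparse (analogous to $n=2^m-1$ before), and for each such candidate I would verify directly, using the closed form $H_n^{(3)}=\binom{n+2}{2}(H_{n+2}-H_2)$, that $H_n^{(3)}$ is actually non-integer — the $2$-adic norm being exactly $1$ does not by itself decide integrality, so these boundary cases need the explicit substitution argument used at the end of the previous proof.

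\textbf{The exceptional order case.} Separately I would handle every $n$ with $\ord_2(n+2)=\ord_2(r-1)=1$, i.e.\ $n+2\equiv 2\pmod 4$, which falls under the ``else'' branch of Theorem 3. Here the exponent involves $\max\{|1/3|_2,\dots,|1/(n+2)|_2\}$, and I would argue that this maximum equals $2^{\ord_2(\text{largest power of }2\text{ in }[3,n+2])}$, which for $n>1$ is at least $2$, giving norm strictly greater than $1$ and hence non-integrality immediately.

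\textbf{Main obstacle.} The hard part will be the boundary cases where the computed exponent $E$ equals $0$: there the $2$-adic test is inconclusive and I must fall back on the explicit binomial formula, showing that $\binom{n+2}{2}(H_{n+2}-1-\tfrac12)$ has an even denominator for a genuinely integer-valued reason. Pinning down exactly which $n$ produce $E=0$ — and confirming there are no accidental integers among them — is the delicate combinatorial step, since the two-bit carry makes the digit bookkeeping less transparent than the clean $n=2^m-1$ characterization of the $r=2$ case.
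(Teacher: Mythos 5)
Your overall plan --- apply the $2$-norm formula of Theorem 5 with $r=3$, minimize the exponent, and fall back on the explicit binomial formula in boundary cases --- is exactly the method the paper intends (its ``proof'' of this corollary is literally the instruction to reuse the method of the $r=2$ case). But your execution contains a genuine error: you have misread the definition of $\ord_2$. In this paper $\ord_2(n)=\lfloor\log_2 n\rfloor$, defined by $2^m\le n<2^{m+1}$; it is \emph{not} the $2$-adic valuation. Your statements ``$\ord_2(n+2)=1$, i.e.\ $n+2\equiv 2\pmod 4$'' and your worry that odd $n+2$ has order $0$ both presuppose the valuation reading. Under the paper's definition, $\ord_2(r-1)=\ord_2(2)=1$ while $\ord_2(n+2)\ge 2$ for every $n\ge 2$, so the hypothesis $\ord_2(n+r-1)>\ord_2(r-1)$ of Theorem 5 holds for \emph{all} $n>1$: the ``else'' branch never arises, your entire ``exceptional order case'' is vacuous, and no case split on the parity of $n+2$ is needed. (Your treatment of that spurious case is also internally flawed: you conclude the norm exceeds $1$ from the max term alone, ignoring the digit-sum contribution $A_2(n+2)-A_2(n)-A_2(2)$ in the exponent, which can be negative.)

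The second gap is that the one computation that actually proves the statement --- the digit analysis --- is only sketched, and your expectation about its outcome is wrong in a way that matters. Write $M=n+2$, $m=\ord_2(M)\ge 2$, and $E=A_2(M)-A_2(n)-1+m$. A short carry analysis shows $E\ge 1$ always: if the bit of $M$ in position $1$ is $1$, then $A_2(n)=A_2(M)-1$ and $E=m\ge 2$; if instead the bits of $M$ in positions $1,\dots,j-1$ are $0$ and position $j$ holds the lowest such $1$ (necessarily $j\le m$, since $2^m\le M<2^{m+1}$ forces bit $m$ to be $1$), then subtracting $2$ gives $A_2(n)=A_2(M)+j-2$ and $E=m-j+1\ge 1$. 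Hence $|H_n^{(3)}|_2=2^E\ge 2>1$ for every $n>1$ and the corollary follows at once. In particular, unlike the $r=2$ case with its exceptional family $n=2^m-1$, here there are \emph{no} boundary cases with $E=0$; the ``delicate combinatorial step'' and the fallback to $\binom{n+2}{2}(H_{n+2}-H_2)$ that you flag as the main obstacle are not needed at all. In short: your plan follows the paper's method, but the $\ord_2$ misreading creates spurious cases backed by invalid arguments, and the analysis that would have finished the proof is left undone.
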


As we can see, the method to prove the non-integer property of harmonic numbers does not work for hyperharmonic numbers, because there are $n$ and $r$ integers for which $|H_n^{(r)}|_2=1$. In spite of this fact, we believe that Theisinger's theorem holds for all hyperharmonic numbers, too.
\begin{Conjecture}
None of the hyperharmonic numbers can be integers ($r,n\ge 2$).
\end{Conjecture}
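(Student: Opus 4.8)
The plan is to prove non-integrality one prime at a time: $H_n^{(r)}$ is a positive integer if and only if $v_p(H_n^{(r)})\ge 0$ for every prime $p$, where I write $v_p(x)$ for the exponent $\alpha$ appearing in $|x|_p=p^{-\alpha}$. Hence it suffices to exhibit, for each pair $(n,r)$ with $n,r\ge 2$, a single prime $p$ with $v_p(H_n^{(r)})<0$. Starting from $H_n^{(r)}=\binom{n+r-1}{r-1}(H_{n+r-1}-H_{r-1})$ and writing $N=n+r-1$, I would first upgrade Lemma 2 to an arbitrary base: Legendre's formula gives $v_p(m!)=(m-A_p(m))/(p-1)$, so by Kummer's theorem $v_p\binom{N}{n}$ equals the number of carries when adding $n$ and $r-1$ in base $p$; in particular it is a nonnegative quantity which the binomial can use to absorb denominators. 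For the reciprocal part $H_N-H_{r-1}=\sum_{k=r}^{N}\frac1k$, if the window $[r,N]$ contains a \emph{unique} index $k^\ast$ of maximal valuation $M:=\max_{r\le k\le N}v_p(k)$, then the strong triangle inequality forces $v_p\!\left(\sum_{k=r}^N\frac1k\right)=-M$, with no cancellation. Combining, the target inequality becomes
\[v_p\!\left(H_n^{(r)}\right)=v_p\binom{N}{n}-M<0,\qquad\text{i.e.}\qquad v_p\binom{N}{n}<M.\]

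Second, I would dispose of the cases already within reach of the $2$-adic method. Theorem 3 shows that whenever $\ord_2(N)>\ord_2(r-1)$ one has $|H_n^{(r)}|_2>1$ unless the exponent vanishes, and the analysis of Corollary 4 isolates exactly the arithmetic configurations (in terms of $A_2$ and $\ord_2$) in which $p=2$ produces valuation $0$. For every $(n,r)$ lying in this residual set the prime $2$ is useless and one must pass to odd primes, so the genuine content of the conjecture concerns precisely these pairs.

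Third, for the residual pairs I would search for a suitable odd prime using the distribution of primes in short intervals (Bertrand's postulate, and more sharply the Sylvester--Erd\H{o}s theorem on prime factors of products of consecutive integers). The bookkeeping I would rely on is
\[v_p\binom{N}{n}=\Bigl(\sum_{k=r}^{N}v_p(k)\Bigr)-v_p(n!),\]
so that $v_p\binom{N}{n}<M$ is equivalent to $\sum_{k=r}^{N}v_p(k)-M<v_p(n!)$; that is, the total $p$-adic valuation of the window, \emph{beyond} its single largest term, must stay below the slack $v_p(n!)$ contributed by the denominator. Thus a good witness is a prime whose valuation on $[r,N]$ is concentrated in one high-power term while still being small enough ($p\le n$) to appear in $n!$.

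The hard part will be exactly this last existence statement, and it is where the problem stops being routine. The most natural candidates fail: if $p$ divides a unique term of the window and $p>n$, then $\sum_{k=r}^N v_p(k)=M$ and $v_p(n!)=0$, whence $v_p\binom{N}{n}=M$ and $v_p(H_n^{(r)})=0$ exactly -- consistent with integrality and therefore no help. One is forced onto primes $p\le n$ for which the window straddles a high power $p^a$, so that the valuation is concentrated in a single term beyond what the carries of $n+(r-1)$ account for. Proving that such a prime can always be found, for every admissible $(n,r)$ simultaneously, would require controlling carry and digit patterns across all bases together with the occurrence of prime powers in the interval $[r,N]$; this uniform existence is the crux, and its apparent inaccessibility by the present elementary $p$-adic bookkeeping is precisely why the statement is offered here only as a conjecture.
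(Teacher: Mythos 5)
The statement you are trying to prove is offered in the paper as a \emph{Conjecture}: the paper contains no proof of it, and indeed explicitly explains why its own method cannot supply one (its theorem on $|H_n^{(r)}|_2$ shows the $2$-adic norm equals $1$ for certain pairs $(n,r)$, so the prime $2$ cannot witness non-integrality in general). Your proposal, as you yourself concede in the final paragraph, is likewise not a proof; it is a correct and somewhat sharper reformulation of the obstacle. The parts you do establish are sound: the reduction to finding a single prime with $v_p(H_n^{(r)})<0$; the upgrade of Lemma 2 to Legendre's formula and Kummer's theorem, giving $v_p\binom{N}{n}$ as the number of base-$p$ carries in $n+(r-1)$; and the ultrametric observation that a \emph{unique} index of maximal valuation $M$ in the window $[r,N]$ forces $v_p\bigl(\sum_{k=r}^{N}\tfrac{1}{k}\bigr)=-M$ exactly. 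This generalizes to odd primes what the paper does for $p=2$, and your identification of the residual pairs (where $p=2$ gives norm $1$) agrees with the paper's corollaries.

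The genuine gap is your third step, which carries the entire weight of the claim: the existence, for \emph{every} residual pair $(n,r)$, of a prime $p\le n$ such that the window $[r,n+r-1]$ contains a term whose $p$-adic valuation strictly exceeds the number of base-$p$ carries in the addition $n+(r-1)$. Nothing you cite establishes this. Bertrand's postulate and Sylvester--Erd\H{o}s produce primes in the window, but, as you correctly compute, a prime $p>n$ dividing a unique term gives $v_p(H_n^{(r)})=0$ exactly and is useless; and no tool in your write-up controls carry patterns simultaneously across all bases $p\le n$. So the proposal must be judged an unproved strategy, not a proof --- which is precisely the state in which the paper leaves the question. Moreover, this gap is not one that more diligence could close: subsequent work (Sertba\c{s}, \emph{Hyperharmonic integers exist}) produced pairs $(n,r)$ with $H_n^{(r)}\in\ZZ$, so the conjecture as stated is false for certain very large $r$, and the uniform existence statement your plan requires cannot hold for all $(n,r)$.
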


\begin{Example}We demonstrate that the theorem described above simplifies the calculation of the $2$-norm of hyperharmonic numbers.

For instance, 
\[H_{18}^{(8)}=\binom{18+8-1}{8-1}(H_{18+8-1}-H_{8-1})=\]
\[=480700\left(\frac{34052522467}{8923714800}-\frac{363}{140}\right)=\frac{10914604807}{18564}.\]
Since $|18564|_2=2^{-2}$, we get that $|H_{18}^{(8)}|_2=2^2=4$.

On the other hand, $A_2(18+8-1)=A_2(16+8+1)=3$, $A_2(8-1)=A_2(4+2+1)=3$, $A_2(18)=A_2(16+2)=2$ and $\ord_2(18+8-1)=\ord_2(16+9)=4$. By theorem 5,
\[|H_{18}^{(8)}|_2=2^{3-3-2+4}=2^2=4.\]
\end{Example}

Finally, we pose an interesting question:

\begin{Problem}For which $n_1\neq n_2$ and $r_1\neq r_2$ does the equality
\[H_{n_1}^{(r_1)}=H_{n_2}^{(r_2)}\]
stand?
\end{Problem}

\newpage


\begin{thebibliography}{AAAAAA}
\bibitem[1]{BGG}A. T. Benjamin, D. Gaebler and R. Gaebler, A combinatorial approach to hyperharmonic numbers, \emph{INTEGERS: The Electornic Journal of Combinatorial Number Theory}, Vol 3 (2003), 1-9, \#A15.
\bibitem[2]{BR}B. H. Brown and B. Rosenbaum, E46, \emph{Amer. Math. Monthly} 41 (1934), 48-49.
\bibitem[3]{BS}E. B. Burger and T. Struppeck, Does $\sum_{n=0}^\infty\frac{1}{n!}$ Really Converge? Infinite Series and $p$-adic Analysis, \emph{Amer. Math. Monthly} 103 (1996), 565-577.
\bibitem[4]{CG}J. H. Conway and R. K. Guy, \emph{The book of numbers}, New York, Springer-Verlag, p. 258-259, 1996.
\bibitem[5]{GKP}R. L. Graham et al. \emph{Concrete Mathematics}, Addison Wesley, 1993.
\bibitem[6]{T}L. Theisinger, Bemerkung \"uber die harmonische Reihe, \emph{Monatschefte f\"ur Mathematik und Physik} 26 (1915), 132-134.
\bibitem[7]{MW}E. W. Weisstein et al. "Harmonic Number." From MathWorld--A Wolfram Web Resource. http://mathworld.wolfram.com/Harmo\-nicNumber.html 
\end{thebibliography}
\end{document}